\definecolor{TUMblau}		{RGB}{0, 101, 189}
\definecolor{TUMgruen}		{RGB}{162, 173, 0}
\definecolor{TUMorange}		{RGB}{227, 114, 34}
\definecolor{TUMdunkelrot}	{RGB}{156,013,022}
\definecolor{gray}			{RGB}{140,140,140}
	\pgfplotsset{compat=newest} %newest
	\pgfplotsset{plot coordinates/math parser=false}
	\pgfplotsset{yticklabel style={text width=2.5em,align=right}}
	\pgfplotsset{/pgf/number format/.cd, fixed,precision=3}
	\newlength\myheight
	\newlength\mywidth
\newcommand{\Reals}{\mathbb{R}}
\newcommand{\Complex}{\mathbb{C}}
\newcommand{\Htwo}{\mathcal{H}_2}
\newcommand{\Hinf}{\mathcal{H}_\infty}
\newcommand{\defeq}{\vcentcolon=}
\newcommand{\trans}[1]{#1^\top}
\newcommand{\inv}[1]{#1^{-1}}
\newcommand{\invt}[1]{#1^{-\top}}
\DeclareMathOperator{\diagmat}{diag}
	\newcommand\diag[1]{\diagmat\left(#1\right)}
\newcommand{\norm}[1]{\left\lVert#1\right\rVert}
\newcommand{\corr}[1]{#1} %switch it off
\newcommand{\Ltwo}{\mathcal{L}_2}
\newcommand{\Linf}{\mathcal{L}_\infty}
\DeclareMathOperator*{\argmin}{arg\,min}
\newcommand{\fo}{N} %full order
\newcommand{\ro}{n} %reduced order
\newcommand{\nm}{n_m} %surrogate size
\newcommand{\Dr}{D_r}
\newcommand{\DrOpt}{\Dr^\ast}
\newcommand{\sI}{\sigma} %input shift
\newcommand{\sO}{\mu} %output shift
\newcommand{\rt}{r} %right tangential 
\newcommand{\lt}{l} %left tangential
\newcommand{\Vprim}{\widetilde{V}}
\newcommand{\Wprim}{\widetilde{W}}
\newcommand{\Rprim}{\widetilde{R}}
\newcommand{\Lprim}{\widetilde{L}}
\newcommand{\AllShifts}{\left\{\sI_i \right\}_{i=1}^\ro}
\newcommand{\AllShiftsO}{\left\{\sO_i \right\}_{i=1}^\ro}
\newcommand{\AllRt}{\left\{ \rt_i\right\}_{i=1}^\ro}
\newcommand{\AllLt}{\left\{ \lt_i\right\}_{i=1}^\ro}
\newcommand{\kIrka}{k}
\newcommand{\IRes}{\hat{b}}
\newcommand{\ORes}{\hat{c}}
\newcommand{\GrD}{G_r^D}
\newcommand{\GrDOpt}{G_r^{\ast}}
\newcommand{\GrpD}{\widetilde{G}_{r}^D}   
\newcommand{\ArD}{A_r^D}
\newcommand{\BrD}{B_r^D}
\newcommand{\CrD}{C_r^D}
\newcommand{\Tv}{T_v}
\newcommand{\Tw}{T_w}
\newcommand{\DrZ}{\Dr^0}
\newcommand{\GrZ}{G_r^0}
\newcommand{\GeZ}{G_e^0}
\newcommand{\Arsig}{\mathcal{K}_r}
\newcommand{\Loew}{\mathbb{L}}
\newcommand{\sLoew}{\sigma\mathbb{L}}
\begin{document}

\title*{Interpolatory methods for $\Hinf$ model reduction of multi-input/multi-output systems\thanks{The work of the first author was supported by the German Research Foundation (DFG), Grant LO408/19-1, as well as the TUM Fakult\"{a}ts-Graduiertenzentrum Maschinenwesen; the work of the second author was supported by the Einstein Foundation - Berlin;  the work of the third author was supported by the Alexander von Humboldt Foundation.}
	}
\titlerunning{Interpolatory $\Hinf$ model reduction of MIMO systems}
\author{Alessandro Castagnotto, Christopher Beattie, and Serkan Gugercin}
\institute{Alessandro Castagnotto \at Technical University of Munich, Garching bei M\"{u}nchen \email{a.castagnotto@tum.de} 
\and Christopher Beattie \at Virginia Tech, Blacksburg, VA  24061 USA  \email{beattie@vt.edu}
\and Serkan Gugercin \at Virginia Tech, Blacksburg, VA  24061 USA  \email{gugercin@vt.edu}}
\maketitle

\thispagestyle{specialfooter}

\abstract{We develop here a computationally effective approach for producing high-quality $\Hinf$-approximations to large scale linear dynamical systems having multiple inputs and multiple outputs (MIMO). 
We extend an approach for $\Hinf$ model reduction introduced by Flagg, Beattie, and Gugercin \cite{Flagg_2013_Hinf} for the single-input/single-output (SISO) setting, which combined ideas originating in interpolatory $\Htwo$-optimal  model reduction with complex Chebyshev approximation.  
Retaining this framework, our approach to the MIMO problem has its principal computational cost dominated by (sparse) linear solves, and so it can remain an effective strategy in many large-scale settings.
\textcolor{black}{We are able to avoid computationally demanding $\Hinf$ norm calculations that are normally required to monitor progress 
 within each optimization cycle through the use of  ``data-driven'' rational approximations that are built upon previously computed function samples}. % Vector fitting ``data-driven" partial realizations
Numerical examples are included that illustrate our approach.  We produce high fidelity reduced models having consistently better $\Hinf$ performance than models produced via balanced truncation; 
these models often are as good as (and occasionally better than) models produced using optimal Hankel norm approximation as well. 
In all cases considered, the method described here produces reduced models at far lower cost than is possible with either balanced truncation or optimal Hankel norm approximation.}

%%	MAIN BODY %%%%%%%%%%%%%%%%%%%%%%%%%%%%%%%%

\section{Introduction}
\label{sec:Intro}

The accurate modeling of dynamical systems often requires that a large number of differential equations describing the 
evolution of a large number of state variables be integrated over time to predict system behavior.  
The number of state variables and differential equations involved can be especially large and forbidding when these models arise, say, from a modified nodal analysis of integrated electronic circuits, or more broadly, from a spatial discretization of partial differential equations over a fine grid.  Most dynamical systems arising in practice can be represented at least locally around an operating point, with a state-space representation having the form
\begin{equation}
\begin{aligned}
E \,\dot{x} &= A\, x + B\, u, \\
y &= C\, x + D\, u,
\end{aligned}
\label{eq:FOM}
\end{equation}	
where $E\ts\in\ts\Reals^{\fo \ts\times\ts \fo}$ is the \emph{descriptor matrix}, $A\ts\in\ts\Reals^{\fo\ts\times\ts\fo}$ is the system matrix and $x\ts\in\ts\Reals^\fo$, $u\ts\in\ts\Reals^m$, and $y\ts\in\ts\Reals^p$ ($p,m\ts\ll\ts \fo$) represent the state, input, and output of the system, respectively.  A static feed-through relation from the control input $u$ to the control output $y$ is modeled through the matrix $D\in\Reals^{p\times m}$.   Most practical systems involve several actuators (input variables) and several quantities of interest (output variables), motivating our focus here on systems having multiple inputs and multiple outputs (MIMO).

In many application settings, the state dimension $\fo$ (which typically matches the order of the model) can grow quite large as greater model fidelity is pursued, and in some cases it can reach magnitudes of $10^6$ and more.  Simulation,  optimization, and control design based on such large-scale models becomes computationally very expensive, at times even intractable. This motivates consideration of \emph{reduced order models} (ROMs), which are comparatively low-order models that in spite of having significantly smaller order, $\ro \ll \fo$, are designed so as to reproduce the input-output response of the full-order model (FOM) accurately while preserving certain fundamental structural properties, that may include stability and passivity.
For state space models such as \eqref{eq:FOM}, reduced models are obtained generally through Petrov-Galerkin projections having the form:
\begin{equation}\label{eq:ROM}	
\begin{aligned}
\overbrace{\trans{W}E\,V}^{E_r} \, \dot{x}_r\, &= \, \overbrace{\trans{W}A\,V}^{A_r}\, x_r \, +  \, \overbrace{\trans{W} B}^{B_r} \, u,\\ 
y_r \, &= \; \underbrace{C\,V}_{C_r}\, x_r \,+ \; D_r \, u.
\end{aligned}
\end{equation}
The projection matrices $V,W \in \Reals^{\fo\ts\times\ro}$ become the primary objects of scrutiny in the model reduction enterprise, since how they are chosen has a great impact on the quality of the ROM.
For truly large-scale systems, \emph{interpolatory model reduction}, which includes approaches known variously as \emph{moment matching} methods and \emph{Krylov subspace} methods, has drawn significant interest due to its flexibility and comparatively low computational cost \cite{Antoulas_Book,Gallivan_2002,Beattie_2014_Survey}.   Indeed, these methods typically require only the solution of large (generally sparse) linear systems of equations, for which several optimized methods are available. Through the appropriate selection of $V$ and $W$, it is possible to match the action of the transfer function
\begin{equation}  \label{eq:FOMTransFnc}
G(s) = C\inv{\left(sE-A\right)}B + D
\end{equation}
along arbitrarily selected \emph{input} and \emph{output} tangent directions at arbitrarily selected (driving) frequencies. 
The capacity to do this is central to our approach and is stated briefly here as: 
\begin{theorem}[\cite{Grimme_PhD, Gallivan_2004_MIMO}]
	Let $G(s)$ be the transfer function matrix \eqref{eq:FOMTransFnc} of the FOM \eqref{eq:FOM} and let $G_r(s)$ be the transfer function matrix of an associated ROM obtained through Petrov-Galerkin projection as in \eqref{eq:ROM}. 
	Suppose $\sI,\sO\in\Complex$ are complex scalars (``shifts") that do not coincide with any eigenvalues of the matrix pencil $(E,A)$ but otherwise are arbitrary. Let also $\rt\in\Complex^m$ and $\lt\in\Complex^p$ be arbitrary nontrivial tangent directions. 
	Then
	\begin{subequations}\label{eq:interpolation}
%		\begin{align}
%		G(\sI)\cdot \rt &= G_r(\sI)\cdot \rt, \label{eq:HI1} \\
%		\trans{\lt}\cdot G(\sO) &= \trans{\lt}\cdot G_r(\sO), \label{eq:HI2}\\
%		\trans{\lt}\cdot G'(\sI)\cdot \rt &= \trans{\lt} \cdot G'_r(\sI) \cdot \rt. \qquad \text{if } \sI=\sO
%		\end{align}
\begin{align}
		G(\sI)\cdot \rt = G_r(\sI)\cdot \rt  \quad& \text{ if } \inv{\left(A - \sI E\right)}B\rt \in \mathsf{Ran}(V),\label{eq:HI1}\\
		\trans{\lt}\cdot G(\sO) = \trans{\lt}\cdot G_r(\sO) \quad&\text{ if } {\invt{\left(A - \sO E\right)}\trans{C}\lt \in \mathsf{Ran}(W)}, \\
 \trans{\lt}\cdot G'(\sI)\cdot \rt = \trans{\lt} \cdot G'_r(\sI) \cdot \rt \quad &\text{ if, additionally, }  \sI=\sO.
		\end{align}
	\end{subequations}
\end{theorem}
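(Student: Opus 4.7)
The plan is to work directly with the transfer functions, exploiting the oblique projector $\Pi(s) \defeq V\inv{(\trans{W}(sE-A)V)}\trans{W}(sE-A)$ onto $\mathsf{Ran}(V)$ together with its transpose-analogue onto $\mathsf{Ran}(W)$. A short calculation shows that $\Pi(s)v = v$ whenever $v \in \mathsf{Ran}(V)$, and this single identity is what will drive all three claims; the required invertibility of $\trans{W}(sE-A)V$ is implicit in the fact that $G_r$ is assumed to be well-defined at the shifts in question.

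For the right tangential condition, I would rewrite the assumption as $\inv{(\sI E - A)}B\rt = Vz$ for some $z \in \Complex^\ro$, so that $B\rt = (\sI E - A)Vz$. Substituting into $G_r(\sI)\rt = CV\inv{(\trans{W}(\sI E - A)V)}\trans{W}B\rt$ telescopes the middle inverse and leaves $CVz = C\inv{(\sI E - A)}B\rt = G(\sI)\rt$. The left tangential claim is the transposed version: starting from $\invt{(\sO E - A)}\trans{C}\lt = W\tilde z$, one expands $\trans{\lt}G_r(\sO)$ from the right and reaches $\trans{\lt}G(\sO)$ by an analogous telescoping.

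The Hermite condition is where the two hypotheses interact. Differentiation of the resolvent gives $G'(s) = -C\inv{(sE-A)}E\inv{(sE-A)}B$ and analogously for $G_r'$. Setting $M \defeq \trans{W}(\sI E - A)V$ and applying \emph{both} substitutions $\inv{(\sI E - A)}B\rt = Vz$ and $\invt{(\sI E - A)}\trans{C}\lt = W\tilde z$ simultaneously, the right factor $\trans{W}B\rt$ in $G_r'$ becomes $Mz$ and the left factor $\trans{\lt}CV$ becomes $\trans{\tilde z}M$; the outer $M$'s then cancel the pair of $\inv{M}$ factors surrounding the inner block $\trans{W}EV$, giving $\trans{\lt}G_r'(\sI)\rt = -\trans{\tilde z}\trans{W}EVz$. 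Applying the same two substitutions directly to $\trans{\lt}G'(\sI)\rt$ produces the very same scalar, and equality follows.

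The main obstacle, if any, is purely bookkeeping: the manipulations involve non-commuting matrices, and a misplaced transpose or sign would silently derail the argument. Once both substitutions are in hand, however, the algebra is essentially forced, and no structural ingredient is needed beyond the projector identity $\Pi(s)V = V$ and its dual on $\mathsf{Ran}(W)$.
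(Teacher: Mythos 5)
Your argument is correct and is precisely the standard proof of this result: the paper itself states the theorem without proof (citing Grimme and Gallivan et al.), and your telescoping of the hypothesis $\inv{(A-\sI E)}B\rt = Vz$ through $\inv{\left(\trans{W}(\sI E-A)V\right)}$, its dual for $W$, and the combination of both for the Hermite condition is exactly the argument given in those references. The only implicit convention worth flagging is that the feedthrough terms must agree ($D_r=D$) for the stated equalities to hold verbatim, since your computation addresses only the strictly proper parts; the sign discrepancy between your $(\sI E-A)^{-1}$ and the paper's $(A-\sI E)^{-1}$ is immaterial because subspace membership is unaffected.
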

A set of complex shifts, $\AllShifts$, $\AllShiftsO$, with corresponding tangent directions, 
$\AllRt$, $\AllLt$, will be collectively referred to as 
\emph{interpolation data} in our present context.  We define \emph{primitive projection matrices} as
\begin{subequations}%
	\begin{align} %
	\Vprim &\defeq \left[\inv{(A-\sI_1 E)}B\rt_1, \dots, \inv{(A-\sI_\ro E)}B\rt_\ro\right] \label{eq:Vprim}\\
	\Wprim &\defeq \left[\invt{(A-\sO_1 E)}\trans{C}\lt_1, \dots, \invt{(A-\sO_\ro E)}\trans{C}\lt_\ro\right] \label{eq:Wprim}
	\end{align}\label{eq:VWprim}%
\end{subequations}%
Note that $\Vprim$ and $\Wprim$ satisfy Sylvester equations having the form:	
\begin{equation}%
	A\,\Vprim - E\,\Vprim S_{\sI} = B \Rprim \quad \mbox{and}\quad
	\trans{A}\Wprim - \trans{E}\Wprim\,\trans{S_{\sO}} = \trans{C} \Lprim, \label{eq:Sylvester}%
\end{equation}%
%Assuming no multiplicities in the shift parameters, the new matrices take the form 
where $S_{\sI} =  \diag{\sI_1,..,\sI_\ro}\in\Complex^{\ro\times\ro}$, $S_{\sO} = \diag{\sO_1,..,\sO_\ro}\in\Complex^{\ro\times\ro}$, $\Rprim = \left[\rt_1,..,\rt_\ro\right]\in\Complex^{m\times\ro}$ and $\Lprim= \left[\lt_1,\dots,\lt_\ro\right]\in\Complex^{p\times\ro}$ \cite{Gallivan_2004_Sylvester}.  In this way, the Petrov-Galerkin projection of \eqref{eq:ROM} is parameterized by interpolation data and the principal task in defining interpolatory models then becomes the judicious choice of shifts and tangent directions.

 Procedures have been developed over the past decade for choosing interpolation data that yield reduced models, $G_r(s)$, that minimize, at least locally the approximation error, $G(s)-G_r(s)$, as measured with respect to the $\mathcal{H}_2$-norm: 
\begin{equation}
\norm{\corr{G-G_r}}_{\Htwo} \defeq \sqrt{\frac{1}{2\pi}\int_{-\infty}^{\infty}\norm{G(j\omega)-G_r(j\omega)}^2_Fd\omega}
\end{equation}
(see \cite{Antoulas_Book}).
Minimizing the $\Htwo$-error, $\norm{G-G_r}_{\Htwo}$, is of interest through the immediate relationship this quantity bears with the induced system response error:
\begin{equation}
\norm{y-y_r}_{\Linf} \leq  \norm{G-G_r}_{\Htwo}\norm{u(t)}_{\Ltwo},
\end{equation}
%
%and so motivates finding a reduced model $G_r$ that minimizes the $\mathcal{H}_2$-error \sloppy $\norm{G-G_r}_{\Htwo}$.
A well-known approach to accomplish this that has become popular at least in part due to its simplicity and effectiveness is the \emph{Iterative Rational Krylov Algorithm} (IRKA) \cite{Gugercin_2008_IRKA}, which\corr{, in effect}, runs  a simple fixed point iteration aimed at producing interpolation data that satisfy %the \emph{Meier-Luenberger}\todo{Meier-Luenberger only for the SISO case.}
\textcolor{black}{first-order $\Htwo$-optimality conditions,}  
 i.e.,
\begin{subequations}\label{eq:Meier-Luenberger}
	\begin{align}
	G(-\lambda_i)\cdot\IRes_i = G_r(-\lambda_i)\cdot \IRes_i, \quad & 
	\trans{\ORes_i}\cdot G(-\lambda_i) = \trans{\ORes_i}\cdot G_r(-\lambda_i), \\
\mbox{and}\quad	\trans{\ORes_i}\cdot G'(-\lambda_i)\cdot\IRes_i &= \trans{\ORes_i}\cdot G_r'(-\lambda_i)\cdot\IRes_i.
	\end{align}
\end{subequations} 
for $i=1,\dots,\ro$.
The data $\lambda_i$, $\IRes_i$ and $\ORes_i$ are reduced poles and right/left vector residues corresponding to the pole-residue expansion of the ROM:
\begin{equation}\label{eq:PoleResidue}
G_r(s) = \sum_{i=1}^{\ro} \frac{\ORes_i\,\trans{\IRes_i}}{s-\lambda_i}.
\end{equation}

Despite the relative ease with which $\mathcal{H}_2$-optimal reduced models can be obtained, there are several circumstances in which it might be preferable to obtain a ROM which produces a small error as measured in the $\Hinf$-norm: 
\begin{equation}
\norm{G-G_r}_{\Hinf} \defeq \max_\omega\,\varsigma_{max}(G(j\omega)-G_r(j\omega)),
\end{equation}
where $\varsigma_{max}(M)$ denotes the largest singular value of a matrix $M$ (see \cite{Antoulas_Book}).
ROMs having small $\Hinf$-error produce an output response with a uniformly bounded ``energy" error:
 \begin{equation}
\norm{y-y_r}_{\Ltwo} \leq \norm{\corr{G-G_r}}_{\Hinf} \norm{u}_{\Ltwo}.
\end{equation}
The $\Hinf$-error is also used as a robustness measure for closed-loop control systems and is therefore of central importance in robust control.  It finds frequent use in aerospace applications, among others, \corr{where the $\Ltwo$ energy of the system response is of critical interest in design and optimization.}

Strategies for producing reduced models that give good $\Hinf$ performance has long been an active area of research \cite{antoulas2002h}.  Analogous to the $\Hinf$-control design problem, the optimal $\Hinf$ reduction problem can be formulated in terms of \emph{linear matrix inequalities}, although advantageous features such as linearity and convexity are lost in this case \cite{helmersson1994model,varga2001fast}. Due to the high cost related to solving these matrix inequalities, this approach is generally not feasible in large-scale settings. 

Another family of methods for the $\Hinf$ reduction problem relates it to the problem of finding an \emph{optimal Hankel norm approximation} (OHN\corr{A}) \textcolor{black}{\cite{Kavranoglu_1993,Glover_1984,trefethen1981rational}}.  Along these lines the \emph{balanced truncation} (BT) algorithm yields rigorous upper bounds on the $\Hinf$ error and often produces small approximation error, especially for higher reduced order approximants \cite{Antoulas_Book, gugercin2004survey}. Each of these procedures is generally feasible only for mid-size problems since either an all-pass dilation requiring large-scale eigenvalue decomposition 
(for OHNA) or the solution of generalized Lyapunov equations (for BT) is required. Extensions to large-scale models are available, however -- e.g., in 
\textcolor{black}{\cite{benner2004computing,Li_2000,benner2014self,kurschner2016efficient,sabino2006solution,gugercin2003modified}}.

A wholly different approach to the $\Hinf$ model reduction problem for SISO models was proposed by Flagg, Beattie, and Gugercin in \cite{Flagg_2013_Hinf}.  A locally $\mathcal{H}_2$-optimal reduced model is taken as a starting point and adjusted through the variation of rank-one modifications parameterized by the scalar feed-through term, $D$.  Minimization of the $\Hinf$-error with respect to this parameterization available through $D$ produces ROMs that are observed to have generally very good $\Hinf$-performance, often exceeding what could be attained with \corr{OHNA}. 

In this work, we extend these earlier interpolatory methods to MIMO systems. We introduce a strategy that reduces the computational expense of the intermediate optimization steps by means of data-driven MOR methods (we use \emph{vector fitting} \textcolor{black}{\cite{gustavsen1999rational,drmac2015vectorrohtua} }\corr{)}. 
Stability of the reduced model is guaranteed through appropriate constraints in the resulting multivariate optimization problem. Numerical examples show effective reduction of approximation error, often outperforming both \corr{OHNA} and BT.

\section{MIMO Interpolatory $\Hinf$-approximation (MIHA)}

\corr{In this section we first characterize the $\Hinf$-optimal reduced order models from the perspective of rational interpolation. This motivates the usage of $\Htwo$-optimal reduction as a starting point for the model reduction algorithm we propose for  the $\Hinf$ approximation problem.}

\subsection{Characterization of $\Hinf$-approximants via rational interpolation}

In the SISO case, Trefethen \textcolor{black}{\cite{trefethen1981rational}} has characterized best $\Hinf$ approximations within a broader context of rational interpolation:  
\begin{theorem}[Trefethen \cite{trefethen1981rational}] \label{thm:trefsufficient}
Suppose $G(s)$ is a (scalar-valued) transfer function associated with a SISO dynamical system as in (\ref{eq:FOMTransFnc}).  
Let $\widehat{G}_r(s)$ be an optimal $\Hinf$ approximation to $G(s)$ and let $G_r$  be any $n^{\rm th}$ order stable approximation to $G(s)$ that interpolates $G(s)$ at $2n+1$ points in the open right  half-plane. 
% $H(s)-H_r(s)$ has at least $2r+1$ zeros in the right half plane and 
Then
$$
\min_{\omega \in \Reals} | G(j \omega) - G_r(j \omega)| \leq  \| G - \widehat{G}_r \|_{\Hinf} \leq \| G - G_r \|_{\Hinf} 
$$
In particular,  if
%if the image of the imaginary axis under 
$|G(j \omega)-G_r(j \omega)|=\mathsf{const}$ for all $\omega\in \Reals $ 
% is a perfect circle about the origin, 
then $G_r$ is itself an optimal $\Hinf$-approximation to $G(s)$.	
\end{theorem}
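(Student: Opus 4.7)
The plan is to observe first that the upper bound $\|G - \widehat{G}_r\|_{\Hinf} \le \|G - G_r\|_{\Hinf}$ is an immediate consequence of the optimality of $\widehat{G}_r$ among all stable $n$-th order rational approximants, with $G_r$ being just one such candidate. Likewise, the ``constant error'' corollary requires no additional machinery once the two-sided inequality is in hand: if $|G(j\omega) - G_r(j\omega)| \equiv c$ for every $\omega \in \Reals$, then $\min_\omega |G(j\omega) - G_r(j\omega)| = c = \|G - G_r\|_{\Hinf}$, and the two-sided bound collapses to $c \le \|G - \widehat{G}_r\|_{\Hinf} \le c$, forcing $G_r$ to be itself $\Hinf$-optimal. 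All the substance of the theorem therefore lies in the lower bound.

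For the lower bound I would proceed by contradiction, and the natural tool is Rouch\'{e}'s theorem on a Nyquist-type contour. Set $e(s) \defeq G(s) - G_r(s)$, $\hat{e}(s) \defeq G(s) - \widehat{G}_r(s)$, and $M \defeq \|\hat{e}\|_{\Hinf}$, and suppose, toward a contradiction, that $m \defeq \min_\omega |e(j\omega)| > M$. Then on the imaginary axis $|\hat{e}(j\omega)| \le M < m \le |e(j\omega)|$. Closing the imaginary segment by a large semicircle of radius $R$ in the open right half-plane and exploiting the decay of $e$ and $\hat{e}$ there, the strict inequality $|\hat{e}| < |e|$ persists on the whole closed contour for $R$ sufficiently large. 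Rouch\'{e}'s theorem then asserts that $e$ and $e - \hat{e}$, the latter equaling up to a sign the rational difference $G_r - \widehat{G}_r$, have the same number of zeros (counted with multiplicity) in the enclosed right half-plane.

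The contradiction now falls out of a degree count: by hypothesis $e$ vanishes at each of the $2n+1$ interpolation points $\{s_k\}$ in the open RHP, so $e$ has at least $2n+1$ zeros there, and hence so does $G_r - \widehat{G}_r$; but $G_r - \widehat{G}_r$ is a proper rational function of McMillan degree at most $2n$ (it is the difference of two $n$-th order stable rational functions), so its total number of zeros in $\Complex$ cannot exceed $2n$, a contradiction. This forces $m \le M$ and establishes the lower bound. The most delicate point in carrying the plan through is the contour-closing step: one must control the behavior of $e$ and $\hat{e}$ on the semicircular arc at infinity in order to legitimately apply Rouch\'{e}, which typically requires either assuming a common feed-through between $G$ and $G_r$ (so that $e$ is strictly proper) or, more cleanly, recasting the argument in $\Hinf$ via an inner-outer factorization of $e$ and replacing the Nyquist contour with a maximum-modulus / Blaschke-product comparison on the closed right half-plane, where the $2n+1$ prescribed zeros of $e$ are absorbed into the inner factor and the degree obstruction reappears as a bound on the McMillan degree of the resulting $\Hinf$ quotient.
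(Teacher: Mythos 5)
The paper does not actually prove this statement---it is quoted verbatim from Trefethen's work and used as motivation---so there is no in-text argument to compare against. Your proposal is correct, and it is in essence the classical winding-number argument behind such ``near-circularity'' lower bounds: the upper bound and the constant-error corollary are immediate from optimality, and the lower bound follows from Rouch\'{e}'s theorem applied to $e = G - G_r$ and $e - \hat{e} = \widehat{G}_r - G_r$ on a Nyquist contour, combined with the degree count that a difference of two order-$n$ rational functions has at most $2n$ zeros while $e$ is forced to have $2n+1$ zeros in the open right half-plane. The one step you flag as delicate---closing the contour at infinity---is in fact harmless and requires no additional hypothesis: under the contradiction assumption $m \defeq \min_{\omega}|e(j\omega)| > M \defeq \|G - \widehat{G}_r\|_{\Hinf}$, the limit $|e(\infty)| = \lim_{\omega\to\infty}|e(j\omega)|$ is a limit of values each at least $m$ and hence is itself at least $m$, while $|\hat{e}(\infty)| \le M < m$; the strict inequality $|\hat{e}| < |e|$ therefore persists on the semicircular arc for all sufficiently large radii. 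In particular, no common feed-through between $G$ and $G_r$ need be assumed (and if $e$ were strictly proper, then $m = 0$ and the lower bound is vacuous anyway), so the inner--outer detour you sketch as an alternative is unnecessary.
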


For the SISO case, a good $\Hinf$ approximation will be obtained when the modulus of the error, 
$|G(s)-G_r(s)|$, is nearly constant as $s=j \omega$ runs along the imaginary axis. 
In the MIMO case, the analogous argument becomes more technically involved as the maximum singular value of matrix-valued function $G(s)-G_r(s)$ will not generally be analytic in the neighborhood of the imaginary axis (e.g., where multiple singular values occur).  Nonetheless, the intuition of the SISO case carries over to the MIMO case, as the following \emph{Gedankenexperiment} might suggest: Suppose 
that $\widehat{G}_r$ is an $\Hinf$-optimal interpolatory approximation to $G$ but $\varsigma_{max}(G(j \omega)-G_r(j \omega))$ is not constant with respect to $\omega\in\Reals$. Then there exist frequencies $\hat{\omega}$ and $\tilde{\omega}\in\Reals$ and $\epsilon>0$ such that
\begin{align*}
\|G-\widehat{G}_r\|_{\Hinf}=\varsigma_{max}(G(j \hat{\omega})-\widehat{G}_r(j \hat{\omega}))  \geq &
\, \epsilon+ \min_{\omega}\varsigma_{max}(G(j\omega)-\widehat{G}_r(j\omega))  \\
& =\epsilon+ \varsigma_{max}(G(j\tilde{\omega})-\widehat{G}_r(j\tilde{\omega})).
\end{align*}
By nudging interpolation data away from the vicinity of $\tilde{\omega}$ and toward $\hat{\omega}$ 
while simultaneously nudging the poles of $\widehat{G}_r$ away from the vicinity of $\hat{\omega}$ and toward
$\tilde{\omega}$, one may decrease the value of $\varsigma_{max}(G(j \hat{\omega})-\widehat{G}_r(j \hat{\omega}))$ while increasing the value of $\varsigma_{max}(G(j\tilde{\omega})-\widehat{G}_r(j\tilde{\omega}))$. This will (incrementally) decrease the $\Hinf$ norm and bring the values of $\varsigma_{max}(G(j \hat{\omega})-\widehat{G}_r(j \hat{\omega}))$ and $\varsigma_{max}(G(j\tilde{\omega})-\widehat{G}_r(j\tilde{\omega}))$ closer together toward a common value. 

Of course, the nudging process described above contains insufficient detail to suggest an algorithm, and indeed, our approach to this problem follows a somewhat different path, a path that nonetheless uses the guiding heuristic for (near) $\Hinf$-optimality:
\begin{equation}\label{circErrorCond}
\varsigma_{max}(G(j \omega)-\widetilde{G}_r(j \omega))\approx\mathsf{const}\quad\mbox{ for all }\omega\in\Reals.
\end{equation}
Approximations with good $\Hinf$ performance should 
have an advantageous configuration of poles and interpolation data 
that locates them symmetrically about the imaginary axis, thus balancing regions where $\varsigma_{max}(G(s)-\widetilde{G}_r(s))$ is big (e.g., pole locations) symmetrically against regions reflected across the imaginary axis where $\varsigma_{max}(G(s)-\widetilde{G}_r(s))$ is small (e.g., interpolation locations). 
This configuration of poles and interpolation data, we note, is precisely the outcome of optimal $\Htwo$ approximation as well, and this will provide us with an easily computable approximation that is likely to have good $\Hinf$ performance.  

	\subsection{\corr{$\Hinf$ approximation with interpolatory $\Htwo$-optimal initialization}}

	\corr{Local $\Htwo$-optimal ROMs are often observed to give good $\Hinf$ performance -- this is in addition to the expected good $\Htwo$ performance.}
	This $\Hinf$ behaviour is illustrated in Figure \ref{fig:BTvsIRKA}, where the $\Hinf$ approximation 
	errors of  local $\Htwo$-optimal ROMs produced by IRKA are compared to ROMs of the same order obtained through BT for the CD player MIMO benchmark model \cite{Chahlaoui_Benchmark}.		
		\setlength\mywidth{0.8\textwidth}
		\setlength\myheight{3 cm} 
		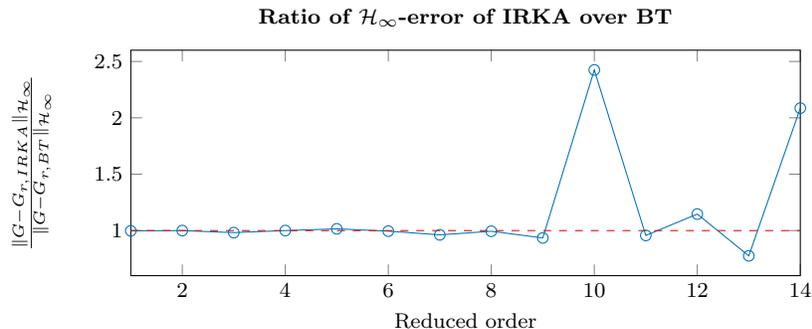
\begin{figure}[h!]
			\centering
			% This file was created by matlab2tikz.
% Minimal pgfplots version: 1.3
%
\definecolor{mycolor1}{rgb}{0.00000,0.44700,0.74100}%
\begin{tikzpicture}

\begin{axis}[%
width=0.95092\mywidth,
height=\myheight,
at={(0\mywidth,0\myheight)},
scale only axis,
xmin=1,
xmax=14,
xlabel={Reduced order},
ymin=0.6,
ymax=2.6,
ylabel={$\frac{\|G-G_{r,IRKA}\|_{\mathcal{H}_\infty}}{\|G-G_{r,BT}\|_{\mathcal{H}_\infty}}$},
title style={font=\bfseries},
title={Ratio of $\mathcal{H}_\infty$-error of IRKA over BT},
legend style={font=\footnotesize}
]
\addplot [color=mycolor1,solid,mark=o,mark options={solid},forget plot]
  table[row sep=crcr]{%
1	0.997851581762705\\
2	0.999999999989451\\
3	0.981759734117045\\
4	1.00000155158615\\
5	1.01521204556222\\
6	0.995085630287912\\
7	0.962076874857423\\
8	0.993911839425368\\
9	0.934882720516036\\
10	2.42489870260363\\
11	0.956640909316975\\
12	1.14637931188759\\
13	0.775836313550081\\
14	2.08591691163017\\
};
\addplot [color=red,dashed,forget plot]
  table[row sep=crcr]{%
1	1\\
14	1\\
};
\end{axis}
\end{tikzpicture}%
			\caption{Numerical investigations indicate that IRKA models are often good also in terms of the $\Hinf$-error.}
			\label{fig:BTvsIRKA}
		\end{figure}
		
The frequently favourable $\Hinf$ behaviour of IRKA models has particular significance in this context, since they are computationally cheap to obtain even in large-scale settings, indeed often they are much cheaper than comparable BT computations.   The resulting locally $\Htwo$-optimal ROMs can be further improved 
(with respect to $\Hinf$ error) by relaxing the (implicit) interpolation constraint at 
$\infty$ while preserving the $\Htwo$-optimal interpolation conditions 
(which is the most important link the $\Htwo$-optimal ROM has with the original model).

Consider the partial fraction expansion 
		\begin{equation}\label{eq:PoleResidueD}
		G_r(s) = \sum_{i=1}^{\fo} \frac{\ORes_i\trans{\IRes_i}}{s-\lambda_{i}} + D_r.
		\end{equation}
For ease of exposition, we assume the poles, $\lambda_{i}$, to be simple, 
although the results we develop here can be extended to the case of higher multiplicity.
The input/output behavior is determined by $\ro$ scalar parameters $\lambda_{i}$, $\ro$ pairs of input/output residuals $\IRes_i,\ORes_i$ and the $p\times m$-dimensional feed-through $D_r$. Considering that a constant scaling factor can be arbitrarily defined in the product of the residuals, this leaves us a total of $\ro\left(p+m\right) + p\cdot m$ parameters, $\ro\left(p+m\right)$ of which can be described in terms of two-sided tangential interpolation conditions \eqref{eq:interpolation}.   This interpolation data is established for the original $\Htwo$-optimal ROM and we wish it to remain invariant over subsequent adjustments, so the only remaining degrees-of-freedom are the $p\cdot m$ entries in the feed-through matrix $D_r$.
		
In the typical context of $\Htwo$-optimal model reduction, $D_r$ is chosen to match the feed-through term $D$ of the original model, thus guaranteeing that the error $G-\widetilde{G}_r$ remains in $\Htwo$.  Note that $D$ remains untouched by the state-space projections in \eqref{eq:ROM},  moreover since typically $p,m \ll \fo$, the feed-through term need not be involved in the reduction process and may be retained from the \corr{FOM}.  Indeed,  retaining the original feed-through term is a necessary condition for $\Htwo$ optimality, forcing interpolation at $s=\infty$ and as a consequence, small error at higher frequencies.
		Contrasting significantly with $\Htwo$-based model reduction, good $\Hinf$ performance does not require $D_r=D$, and in this work we exploit this flexibility in a crucial way.  
		A key observation playing a significant role in what follows was made in \cite{mayo2007framework,beattie2009interpolatory} that the feed-through term $D_r$ induces a parametrization of all reduced order models satisfying the two-sided tangential interpolation conditions. 
	
%		Finally, the parametrization of the FOM by $D_r$ induces a parametrization of the ROM				
		This result is summarized by following theorem taken from \cite[Thm. 4.1]{mayo2007framework} and \cite[Thm. 3]{beattie2009interpolatory}		
		\begin{theorem}\label{th:ROM_Dr}
			Let $\Rprim$, $\Lprim$ be defined through the Sylvester equations in \eqref{eq:Sylvester}. Assume, without loss of generality, that the full order model satisfie\corr{s} $D=0$ and let the nominal reduced model $\GrZ(s)=C_r\inv{\left(sE_r - A_r\right)}B_r$ be obtained through Petrov-Galerkin projection using the primitive projection matrices \eqref{eq:VWprim}. Then, for any $D_r \in \Complex^{p\times m}$, the perturbed reduced order model 
			\begin{equation}\label{eq:ROM_Dr_prim}
			\GrpD(s,\Dr) = \left(\widetilde{C}_{r} + D_r \Rprim \right) \inv{\left[s\widetilde{E}_{r} - \left(\widetilde{A}_{r} + \trans{\Lprim}D_r\Rprim\right) \right]}\left(\widetilde{B}_{r} + \trans{\Lprim}D_r \right) + D_r
			\end{equation}
			also satisfies the tangential interpolation conditions \eqref{eq:interpolation}.
		\end{theorem}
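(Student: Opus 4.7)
My plan is to verify the three tangential conditions in \eqref{eq:interpolation} for $\GrpD(s,\Dr)$ directly, by reducing each to the corresponding condition that already holds for the nominal model $\GrZ(s)$. Since $\GrZ$ is obtained by Petrov--Galerkin projection with the primitive matrices $\Vprim, \Wprim$, the theorem of Grimme and Gallivan quoted above guarantees that $\GrZ$ interpolates $G$ along the prescribed tangent data. Consequently it suffices to show that $\GrpD(\sI_i,\Dr)\rt_i = \GrZ(\sI_i)\rt_i$ for every $i$, and, mutatis mutandis, the analogous left and Hermite identities.

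The algebraic engine is the pair of Sylvester equations~\eqref{eq:Sylvester}. Premultiplying the first by $\trans{\Wprim}$ produces the reduced identity $\widetilde{A}_{r} - \widetilde{E}_{r}\, S_{\sI} = \widetilde{B}_{r}\Rprim$, which, read column-by-column, yields
\[
\inv{(\sI_i \widetilde{E}_{r} - \widetilde{A}_{r})}\widetilde{B}_{r}\rt_i = -e_i,
\]
where $e_i$ denotes the $i$th canonical basis vector of $\Complex^{\ro}$. An analogous manipulation of the second Sylvester equation gives the left counterpart $\trans{\lt_i}\widetilde{C}_{r}\inv{(\sO_i \widetilde{E}_{r} - \widetilde{A}_{r})} = -\trans{e_i}$. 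These two identities encode the interpolation properties of $\GrZ$ at the reduced-state-space level, and they are what I plan to push through the $\Dr$ perturbation.

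For the right interpolation condition, I introduce
\[
y_i \defeq \inv{\bigl[\sI_i \widetilde{E}_{r} - (\widetilde{A}_{r} + \trans{\Lprim}\Dr\Rprim)\bigr]}(\widetilde{B}_{r} + \trans{\Lprim}\Dr)\rt_i, \qquad z_i \defeq \rt_i + \Rprim\, y_i.
\]
Expanding the defining equation for $y_i$ and substituting the right Sylvester identity yields $y_i = -e_i + \inv{(\sI_i\widetilde{E}_{r} - \widetilde{A}_{r})}\trans{\Lprim}\Dr\, z_i$; premultiplying by $\Rprim$ and using $\Rprim e_i = \rt_i$ then produces the self-consistency relation $(I - N_i)\, z_i = 0$ with $N_i \defeq \Rprim\inv{(\sI_i\widetilde{E}_{r} - \widetilde{A}_{r})}\trans{\Lprim}\Dr$. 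Substituting back into \eqref{eq:ROM_Dr_prim} collapses the residual to
\[
\GrpD(\sI_i,\Dr)\rt_i - \GrZ(\sI_i)\rt_i = \bigl(\widetilde{C}_{r}\inv{(\sI_i\widetilde{E}_{r} - \widetilde{A}_{r})}\trans{\Lprim} + I_p\bigr)\Dr\, z_i,
\]
so the desired identity reduces to showing $z_i = 0$.

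The main (and essentially the only) technical obstacle is therefore the vanishing of $z_i$, which holds precisely when $I - N_i$ is nonsingular. By the matrix determinant lemma (equivalently, Sherman--Morrison--Woodbury), this is in turn equivalent to the invertibility of $\sI_i\widetilde{E}_{r} - (\widetilde{A}_{r} + \trans{\Lprim}\Dr\Rprim)$, i.e.\ simply the well-definedness of $\GrpD(\cdot,\Dr)$ at the interpolation point $\sI_i$, a hypothesis already implicit in writing down \eqref{eq:ROM_Dr_prim}. The left interpolation identity at $\sO_i$ follows from the mirror computation based on the transposed Sylvester equation. Finally, the bitangential Hermite condition at $\sI_i = \sO_i$ is obtained by differentiating $\GrpD(s,\Dr)$ in $s$: once both one-sided identities---namely $y_i = -e_i$ and its left analogue---are in hand, the derivative telescopes to $-\trans{e_i}\widetilde{E}_{r} e_i$, which coincides with $\trans{\lt_i}(\GrZ)'(\sI_i)\rt_i$ computed the same way.
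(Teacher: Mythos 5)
Your argument is correct. Note first that the paper does not actually prove this statement: Theorem~\ref{th:ROM_Dr} is quoted verbatim from \cite[Thm.~4.1]{mayo2007framework} and \cite[Thm.~3]{beattie2009interpolatory}, and the only proof the paper supplies in this neighborhood is for the basis-change Corollary~\ref{th:Dr-Shift-Extended}. So there is no in-paper proof to compare against; what you have written is a self-contained verification in the spirit of the cited sources. The chain of reductions checks out: projecting the Sylvester equations \eqref{eq:Sylvester} onto the reduced space gives $\widetilde{A}_r - \widetilde{E}_r S_{\sI} = \widetilde{B}_r\Rprim$ and its transpose analogue, whose $i$th column/row yield exactly your identities $\inv{(\sI_i\widetilde{E}_r-\widetilde{A}_r)}\widetilde{B}_r\rt_i=-e_i$ and $\trans{\lt_i}\widetilde{C}_r\inv{(\sO_i\widetilde{E}_r-\widetilde{A}_r)}=-\trans{e_i}$; the fixed-point relation $(I-N_i)z_i=0$ follows from premultiplying by $\Rprim$; and the Weinstein--Aronszajn/Sylvester determinant identity correctly converts nonsingularity of $I-N_i$ into nonsingularity of $\sI_i\widetilde{E}_r-(\widetilde{A}_r+\trans{\Lprim}\Dr\Rprim)$, which is indeed implicit in the statement. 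With $z_i=0$ one gets $y_i=-e_i$, so $\GrpD(\sI_i,\Dr)\rt_i=-\widetilde{C}_re_i=\GrZ(\sI_i)\rt_i=G(\sI_i)\rt_i$, and the Hermite value $-\trans{e_i}\widetilde{E}_re_i$ is likewise unchanged by the perturbation. Two small points worth making explicit if you write this up: (i) your identities presuppose that $\sI_i\widetilde{E}_r-\widetilde{A}_r$ is invertible, i.e.\ that the nominal reduced pencil is regular at the interpolation points (the same implicit hypothesis needed for $\GrZ$ to interpolate at all); and (ii) the final step of each condition still needs the quoted Grimme/Gallivan theorem to pass from $\GrZ$ to $G$, which you do invoke but only in the opening paragraph.
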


		Note that for $D\neq 0$, the results of Theorem \ref{th:ROM_Dr} can be trivially extended by adding $D$ to the right-hand side in \eqref{eq:ROM_Dr_prim}.
		Even though for theoretical consideration the use of primitive Krylov bases $\Vprim,\Wprim$ introduced in \eqref{eq:VWprim} is often convenient, from a numerical standpoint there are several reason why one may choose a different basis for the projection matrices. This next result shows 
		that the interpolation conditions are preserved also for arbitrary bases---in particular also real and orthonormal bases---provided that the shifting matrices $R$ and $L$ are appropriately chosen.
		\begin{corollary}\label{th:Dr-Shift-Extended}
			Let $\Tv, \Tw \in\Complex^{\ro\times\ro}$ be invertible matrices used to transform the primitive bases $\Vprim, \Wprim$ of the Krylov subspace to  new bases $V = \Vprim \Tv$ and $W = \Wprim\Tw$. Let the same transformation be applied to the matrices of tangential directions, resulting in $R = \Rprim\Tv$ and $L = \Lprim\Tw$. Then, for any $D_r$, the ROM $\GrD$ is given by
			\begin{equation}\label{eq:ROM_Dr}
			\GrD(s,\Dr) = \underbrace{\left(C_r + D_r R \right)}_{\CrD} \inv{\left[sE_r - \underbrace{\left(A_r + \trans{L}D_r R\right)}_{\ArD} \right]}\underbrace{\left(B_r + \trans{L}D_r \right)}_{\BrD} + D_r
			\end{equation}	
		\end{corollary}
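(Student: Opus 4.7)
The plan is to observe that transforming from the primitive bases to new bases is nothing more than a state-space similarity transformation on the ROM, and then to verify by direct substitution that the formula in \eqref{eq:ROM_Dr} is invariant under such transformation. Since the transfer function of any state-space realization is unchanged under a similarity transformation, $\GrD(s,\Dr)$ must equal $\GrpD(s,\Dr)$ from Theorem \ref{th:ROM_Dr}, and the tangential interpolation conditions are inherited immediately.

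First I would record how the projected quantities transform. From $V = \Vprim \Tv$ and $W = \Wprim \Tw$ one obtains
\begin{equation*}
E_r = \trans{\Tw}\,\widetilde{E}_r\,\Tv,\quad A_r = \trans{\Tw}\,\widetilde{A}_r\,\Tv,\quad B_r = \trans{\Tw}\,\widetilde{B}_r,\quad C_r = \widetilde{C}_r\,\Tv,
\end{equation*}
while the transformation of tangent-direction matrices yields $R = \Rprim \Tv$ and $\trans{L} = \trans{\Tw}\,\trans{\Lprim}$. Substituting these into \eqref{eq:ROM_Dr}, each of the three building blocks factors cleanly:
\begin{align*}
C_r + D_r R &= (\widetilde{C}_r + D_r \Rprim)\,\Tv, \\
B_r + \trans{L} D_r &= \trans{\Tw}\,(\widetilde{B}_r + \trans{\Lprim} D_r), \\
sE_r - (A_r + \trans{L} D_r R) &= \trans{\Tw}\,\bigl[s\widetilde{E}_r - (\widetilde{A}_r + \trans{\Lprim} D_r \Rprim)\bigr]\,\Tv.
\end{align*}

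The factors $\Tv$ on the right and $\trans{\Tw}$ on the left of the middle bracket pass through the inverse and cancel against the corresponding factors coming from the flanking terms, leaving precisely the expression \eqref{eq:ROM_Dr_prim} for $\GrpD(s,\Dr)$. Theorem \ref{th:ROM_Dr} then ensures the tangential interpolation conditions \eqref{eq:interpolation}, and adding the constant $D$ if $D \neq 0$ extends the conclusion as noted in the remark following Theorem~\ref{th:ROM_Dr}. The only delicate point—barely more than a bookkeeping check—is to verify that choosing $R = \Rprim \Tv$ and $L = \Lprim \Tw$ is indeed the \emph{correct} rescaling of tangent directions, which is confirmed by right-multiplying the Sylvester equations \eqref{eq:Sylvester} by $\Tv$ and $\Tw$ respectively: the resulting equations for $V$ and $W$ have the right-hand sides $BR$ and $\trans{C}L$, while the shift matrices undergo similarity transformations that preserve their spectra (hence preserve the interpolation points). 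There is no genuine obstacle; the entire argument is a one-step similarity calculation.
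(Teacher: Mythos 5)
Your proof is correct and takes essentially the same route as the paper's own: substitute $V=\Vprim\Tv$, $W=\Wprim\Tw$, $R=\Rprim\Tv$, $L=\Lprim\Tw$, factor $\Tv$ and $\trans{\Tw}$ out of the three blocks, and cancel them through the inverse to recover $\GrpD(s,\Dr)$, then invoke Theorem~\ref{th:ROM_Dr}. Your added check via the Sylvester equations (showing the shift matrices undergo a spectrum-preserving similarity) is a harmless elaboration of what the paper summarizes by calling the two realizations restricted system equivalent.
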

		\begin{proof}
			The proof amounts to showing that the transfer function matrix $\GrD$ of the ROM is invariant to a change of basis from $\Vprim$ and $\Wprim$ as long as $\Rprim$ and $\Lprim$ are adapted accordingly. 
			{\small
			\begin{equation*}
			\begin{split}
			&\GrD - D_r = \CrD \inv{\left( s E - \ArD \right)} \BrD \\
			&= \left(CV + \Dr R\right)\inv{\left[ s \trans{W} E V - \trans{W} A V - \trans{L} \Dr \trans{R} \right]} \left(\trans{W} B + \trans{L} \Dr\right) \\
			&= \left(C\Vprim + \Dr \Rprim\right)\Tv\inv{\left[ \trans{\Tw} \left(s \trans{\Wprim} E \Vprim - \trans{\Wprim} A \Vprim - \trans{\Lprim} \Dr \trans{\Rprim} \right) \Tv \right]} \trans{\Tw}\left(\trans{\Wprim} B + \trans{\Lprim} \Dr\right)\\
			&= \left(C\Vprim + \Dr \Rprim\right)\inv{\left[ s \trans{\Wprim} E \Vprim - \trans{\Wprim} A \Vprim - \trans{\Lprim} \Dr \trans{\Rprim} \right]}\left(\trans{\Wprim} B + \trans{\Lprim} \Dr\right) \\
			&= \GrpD - D_r\, \corr{.}
			\end{split}
			\end{equation*}
		}
			The results of Theorem \ref{th:ROM_Dr} generalize to the case of arbitrary bases.
			Following the notation from \cite[Definition 2.1]{mayo2007framework}, the state-space models resulting from Petrov-Galerkin projections with $V,W$ and $\Vprim,\Wprim$ respectively are \emph{restricted system equivalent}. As a consequence, they share the same transfer function matrix.
		\end{proof}
		
		Using the Sherman-Morrison-Woodbury formula \cite{golub2012matrix} for the inverse of rank $k$ perturbations of a matrix\corr{,} we are able to decompose the transfer function of the shifted reduced model into the original reduced model and an additional term.
		\begin{corollary}
			Define the auxiliary variable $\corr{\Arsig}\defeq s E_r - A_r$. The transfer function of the shifted reduced model $\GrD$ can be given as 
			\begin{equation}\label{eq:GrDSplit}
			\GrD(s) = \GrZ(s) + \Delta\GrD(s,\Dr),
			\end{equation}
			where $\GrZ$ is the transfer function of the unperturbed model and $\Delta\GrD$ is defined as
			\begin{equation}
			\begin{aligned}
			\Delta\GrD = \Delta_1 + &\Delta_2 + \Delta_3\cdot\inv{\left(\Delta_4\right)}\cdot\Delta_2 + D_r \\
			\text{given}& \\
			\begin{array}{l}
			\Delta_1 \defeq C_r \inv{\Arsig} \trans{L}D_r \\[2mm]
			\Delta_3 \defeq \left(C_r + D_r R\right) \inv{\Arsig}\trans{L}
			\end{array}
			& \quad
		        \begin{array}{l}
                         \Delta_2 \defeq D_r R \inv{\Arsig} \left(B_r + \trans{L}D_r\right) \\[2mm]
			\Delta_4 \defeq I - D_r R \inv{\Arsig} \trans{L}
		         \end{array}
			\end{aligned}
			\end{equation}
		\end{corollary}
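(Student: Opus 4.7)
The plan is to reduce everything to inverting the single rank-$\min(p,m)$ perturbation that appears in $\ArD = A_r + \trans{L} \Dr R$. Using the auxiliary notation $\Arsig = sE_r - A_r$, we have $sE_r - \ArD = \Arsig - \trans{L} \Dr R$, which has exactly the form required for the Sherman-Morrison-Woodbury identity. Applying SMW yields
\begin{equation*}
\bigl(\Arsig - \trans{L}\Dr R\bigr)^{-1} = \inv{\Arsig} + \inv{\Arsig}\trans{L}\inv{\bigl(I - \Dr R \inv{\Arsig}\trans{L}\bigr)} \Dr R \inv{\Arsig} = \inv{\Arsig} + \inv{\Arsig}\trans{L}\inv{\Delta_4} \Dr R \inv{\Arsig},
\end{equation*}
which is well-defined whenever $s$ is not an eigenvalue of the pencil $(E_r,\ArD)$.

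Next, I substitute this expression into \eqref{eq:ROM_Dr} and expand the product $(C_r + \Dr R)(\cdots)(B_r + \trans{L}\Dr)$. The ``diagonal'' contribution using just $\inv{\Arsig}$ splits into four pieces,
\begin{equation*}
(C_r + \Dr R)\inv{\Arsig}(B_r + \trans{L}\Dr) = C_r \inv{\Arsig} B_r + C_r \inv{\Arsig}\trans{L}\Dr + \Dr R \inv{\Arsig}(B_r + \trans{L}\Dr),
\end{equation*}
where the first summand is exactly $\GrZ(s)$, the second is $\Delta_1$, and the third is $\Delta_2$. The ``off-diagonal'' contribution using $\inv{\Arsig}\trans{L}\inv{\Delta_4} \Dr R \inv{\Arsig}$ factors cleanly as
\begin{equation*}
(C_r + \Dr R)\inv{\Arsig}\trans{L}\,\inv{\Delta_4}\, \Dr R \inv{\Arsig}(B_r + \trans{L}\Dr) = \Delta_3 \inv{\Delta_4} \Delta_2,
\end{equation*}
by definition of $\Delta_2$ and $\Delta_3$.

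Collecting these terms and adding the stand-alone $\Dr$ from \eqref{eq:ROM_Dr} gives
\begin{equation*}
\GrD(s) = \GrZ(s) + \Delta_1 + \Delta_2 + \Delta_3 \inv{\Delta_4}\Delta_2 + \Dr,
\end{equation*}
which is precisely the claimed decomposition \eqref{eq:GrDSplit}. The argument is therefore a careful but routine bookkeeping exercise; the only technical point worth highlighting is verifying that the order and sign of factors in SMW match the definition of $\Delta_4$ as $I - \Dr R \inv{\Arsig}\trans{L}$ rather than the transposed/sign-flipped variant. Since $\Dr$ is generally rectangular, one uses the general Woodbury form with $U = \trans{L}$ and $V = \Dr R$ so that the inner matrix has size $m \times m$, matching the size of $\Delta_4$ as written in the statement.
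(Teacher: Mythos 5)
Your proof is correct and follows exactly the paper's route: apply the Sherman--Morrison--Woodbury identity to $\inv{\left(\Arsig - \trans{L}\Dr R\right)}$ and expand the resulting product, which the paper leaves as ``straightforward algebraic manipulations'' and you carry out explicitly. One tiny slip in your closing aside: with $U=\trans{L}$ and $V=\Dr R$ the inner Woodbury matrix $\Delta_4 = I - \Dr R \inv{\Arsig}\trans{L}$ is $p\times p$, not $m\times m$, but this does not affect the argument.
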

		\begin{proof}
			Note that by the Sherman-Morrison-Woodbury formula, following equality holds:
%			{\small
			\begin{equation}
			\inv{\left(\Arsig - \trans{L}D_rR\right)} = \inv{\Arsig} \corr{+} \inv{\Arsig}\trans{L}\inv{\left(I - D_r R \inv{\Arsig}\trans{L}\right)}D_r R \inv{\Arsig}.
			\end{equation}
%			}
			Using this relation in the definition of $\GrD$, the proof is completed by straightforward algebraic manipulations.
		\end{proof}

		We proceed by attempting to exploit the additional degrees-of-freedom available in $D_r$ to trade off excessive accuracy at high frequencies for improved approximation in lower frequency ranges, 
		as measured with the $\Hinf$-norm.
We first obtain an $\Htwo$-optimal ROM by means of IRKA and subsequently minimize the $\Hinf$-error norm with respect to the constant feed-through matrix $D_r$ while preserving tangential interpolation and guaranteeing stability. The resulting ROM $\GrDOpt$ will represent a local optimum out of the set of all stable ROMs satisfying the tangential interpolation conditions.
		The outline of our proposed reduction procedure, called \emph{MIMO interpolatory $\Hinf$-approximation} (MIHA), is given in Algorithm \ref{algo:Hinf}.
			\begin{algorithm}
				\caption{\corr{MIMO Interpolatory $\Hinf$-Approximation (MIHA)}} \label{algo:Hinf}
				\begin{algorithmic}[1]
					\Require $G(s)$, $\ro$
					\Ensure Stable, locally optimal reduced oder model $\GrDOpt$, approximation error $e^{\ast}_{\Hinf}$
					\State {$G_r^0 \gets \text{IRKA}(G(s),\ro)$
					}%
					\State {$\DrOpt \gets \argmin_{D_r} \norm{G(s)-\GrD(s,\Dr)}_{\Hinf}$ s.t. $\GrD(s,\DrOpt)$ is stable} 
					\State {$\GrDOpt \gets \GrD(s,\DrOpt)$}
					\State {$e^{\ast}_{\Hinf} \gets  \norm{G(s)-\GrDOpt(s)}_{\Hinf}$}
				\end{algorithmic}
			\end{algorithm}	
			
		Numerical results in Section \ref{sec:Results} will show the effectiveness of this procedure in further reducing the $\Hinf$-error for a given IRKA model. However, at this stage the optimization in Step 2 appears problematic, for it requires both the computation of the $\Hinf$-norm of a large-scale model and a constrained multivariate optimization of a non-convex, non-smooth function.
		It turns out that both of these issues can be resolved effectively, as it will be discussed in the following sections.			
			
%	\subsection{Efficient implementation of the proposed procedure}
	\subsection{Efficient implementation}
	As we have noted, the main computational burden of the algorithm described above 
	resides mainly in Step 2.  We are able to lighten this burden somewhat through judicious use of  \eqref{eq:GrDSplit} and by taking advantage 
	of previously computed transfer function evaluations.
	
\subsubsection{A ``free'' surrogate model for the approximation error $G-\GrZ$} \label{sec:surrogate}
			Step 1 of Algorithm \ref{algo:Hinf} requires performing $\Htwo$-optimal reduction using  IRKA. This is a fixed point iteration involving a number of steps $\kIrka$ before convergence is achieved. At every step $j$, Hermite tangential interpolation about some complex frequencies $\AllShifts$ and tangential directions $\AllRt$, $\AllLt$ is performed. 
			For this purpose, the projection matrices in \eqref{eq:VWprim} are computed, and it is easy to see that for all $i=1,\dots,\ro$ it holds
			\begin{subequations}
				\begin{align}
					C \cdot \Vprim e_i = C \inv{\left(A - \sI_i E\right)} B r_i  &= G(\sI_i) \rt_i\\
					\trans{e_i} \trans{\Wprim} \cdot B = \trans{\lt_i} C \inv{\left(A - \sI_i E\right)} B &= \trans{\lt_i} G(\sI_i) \\
					\trans{e_i} \trans{\Wprim} E \Vprim e_i = \trans{\lt_i}\inv{\left(A - \sI_i E\right)}E\inv{\left(A - \sI_i E\right)} \rt_i &= \trans{\lt_i} G'(\sI_i) \rt_i
				\end{align}\label{eq:VW2Data}
			\end{subequations}
			Observe that, at basically no additional cost, we can gather information about the FOM while performing IRKA.	
			Figure \ref{fig:IRKAshifts} illustrates this point by showing the development of the shifts during the IRKA iterations reducing the CDplayer benchmark model to a reduced order $\ro=10$.  For all complex frequencies indicated by a marker, tangent data for the full order model is collected.
			
			\setlength\mywidth{0.35\textwidth}
			\setlength\myheight{0.25\textwidth} 
			\begin{figure}
				\centering
				\begin{subfigure}[t]{0.48\textwidth}
					% This file was created by matlab2tikz.
% Minimal pgfplots version: 1.3
%
\begin{tikzpicture}

\begin{axis}[%
width=0.95092\mywidth,
height=\myheight,
at={(0\mywidth,0\myheight)},
scale only axis,
xmin=0,
xmax=400,
xlabel={real},
ymin=-400,
ymax=400,
ylabel={imaginary},
title style={font=\bfseries},
title={Shift development over IRKA iterations},
legend style={legend cell align=left,align=left,draw=white!15!black},
legend style={font=\footnotesize}
]
\addplot [color=blue,only marks,mark=o,mark options={solid}]
  table[row sep=crcr]{%
0	-300\\
0	300\\
0	-225\\
0	225\\
0	-150\\
0	150\\
0	-75\\
0	75\\
0	0\\
0	0\\
};
\addlegendentry{start};

\addplot [color=black,only marks,mark=+,mark options={solid}]
  table[row sep=crcr]{%
12.2770146888159	-306.540779142107\\
12.2770146888159	306.540779142107\\
19.6443648407331	-196.469480294748\\
19.6443648407331	196.469480294748\\
71.9710131308956	173.866826250016\\
71.9710131308956	-173.866826250016\\
0.22617805801485	-22.5683832701212\\
0.22617805801485	22.5683832701212\\
8.7319776499028	-76.3189433071418\\
8.7319776499028	76.3189433071418\\
366.450860600126	0\\
12.2700722933748	-306.540021891128\\
12.2700722933748	306.540021891128\\
147.829298727982	0\\
0.225705552665523	-22.5693361276167\\
0.225705552665523	22.5693361276167\\
8.25492268605781	-76.7763607133495\\
8.25492268605781	76.7763607133495\\
19.750262444193	-196.568804839882\\
19.750262444193	196.568804839882\\
12.2721262068179	-306.541571950942\\
12.2721262068179	306.541571950942\\
19.7461626101327	-196.626028311155\\
19.7461626101327	196.626028311155\\
8.46269203873	-76.8257398779357\\
8.46269203873	76.8257398779357\\
0.225705925864469	-22.5693407651544\\
0.225705925864469	22.5693407651544\\
23.4383366964924	38.0341188162726\\
23.4383366964924	-38.0341188162726\\
0.225704957176141	-22.5693375436862\\
0.225704957176141	22.5693375436862\\
8.22854433107183	-76.9916975553049\\
8.22854433107183	76.9916975553049\\
42.2844621341867	200.278598768553\\
42.2844621341867	-200.278598768553\\
20.3752452672412	-196.501463099846\\
20.3752452672412	196.501463099846\\
12.2530938904645	-306.541975572987\\
12.2530938904645	306.541975572987\\
12.2729972175068	-306.542630329882\\
12.2729972175068	306.542630329882\\
0.225705262947512	-22.5693369030295\\
0.225705262947512	22.5693369030295\\
8.23862557691978	-76.8566551146636\\
8.23862557691978	76.8566551146636\\
81.4056165292308	-174.831592825241\\
81.4056165292308	174.831592825241\\
19.7449057822091	-196.678044689627\\
19.7449057822091	196.678044689627\\
12.2723847530898	-306.541025245496\\
12.2723847530898	306.541025245496\\
19.7612631052846	-196.616780354884\\
19.7612631052846	196.616780354884\\
0.225704704777192	-22.5693375229508\\
0.225704704777192	22.5693375229508\\
86.1552070655277	-78.1778163583013\\
86.1552070655277	78.1778163583013\\
8.14881547900792	-76.8560635628295\\
8.14881547900792	76.8560635628295\\
12.2718334121096	-306.541820017045\\
12.2718334121096	306.541820017045\\
19.7313698847383	-196.608480773993\\
19.7313698847383	196.608480773993\\
0.22570481413169	-22.569337532756\\
0.22570481413169	22.569337532756\\
8.11160149971707	-76.8742560252325\\
8.11160149971707	76.8742560252325\\
46.9921653072793	-91.7128078952023\\
46.9921653072793	91.7128078952023\\
12.2719385942731	-306.541277190254\\
12.2719385942731	306.541277190254\\
19.7507075564417	-196.604606282483\\
19.7507075564417	196.604606282483\\
0.225704755391261	-22.5693376052891\\
0.225704755391261	22.5693376052891\\
8.10734707480891	-76.8407710124477\\
8.10734707480891	76.8407710124477\\
43.0933887050208	-80.8336814979849\\
43.0933887050208	80.8336814979849\\
12.2718911873275	-306.541272551711\\
12.2718911873275	306.541272551711\\
19.7510457586111	-196.604630303019\\
19.7510457586111	196.604630303019\\
0.225704791763891	-22.5693375776781\\
0.225704791763891	22.5693375776781\\
8.08654624168239	-76.818393658407\\
8.08654624168239	76.818393658407\\
35.3309126516883	-79.5212125874178\\
35.3309126516883	79.5212125874178\\
12.2719541250285	-306.541150623159\\
12.2719541250285	306.541150623159\\
19.7545744699619	-196.606560437447\\
19.7545744699619	196.606560437447\\
0.225704807960986	-22.5693375625889\\
0.225704807960986	22.5693375625889\\
8.09190150319003	-76.7892971354429\\
8.09190150319003	76.7892971354429\\
32.0772594211817	-76.3209185422538\\
32.0772594211817	76.3209185422538\\
12.2719737720129	-306.541102155962\\
12.2719737720129	306.541102155962\\
19.7557264044872	-196.607501488312\\
19.7557264044872	196.607501488312\\
0.225704813773346	-22.5693375458266\\
0.225704813773346	22.5693375458266\\
8.09335473418243	-76.7646983864342\\
8.09335473418243	76.7646983864342\\
30.0678890074453	-74.8064000430963\\
30.0678890074453	74.8064000430963\\
12.2719975882398	-306.541070545129\\
12.2719975882398	306.541070545129\\
19.7563618157751	-196.608530965663\\
19.7563618157751	196.608530965663\\
0.2257048121934	-22.5693375371782\\
0.2257048121934	22.5693375371782\\
8.09834492174192	-76.7485494236466\\
8.09834492174192	76.7485494236466\\
29.2225623371115	-73.740803736578\\
29.2225623371115	73.740803736578\\
12.2720112025976	-306.541057313634\\
12.2720112025976	306.541057313634\\
19.7565510305582	-196.609272914441\\
19.7565510305582	196.609272914441\\
0.225704804694406	-22.5693375380258\\
0.225704804694406	22.5693375380258\\
8.10103273405736	-76.7382424444509\\
8.10103273405736	76.7382424444509\\
28.8399703734877	-73.0593007326051\\
28.8399703734877	73.0593007326051\\
12.2720204427969	-306.541054195369\\
12.2720204427969	306.541054195369\\
19.7564431168236	-196.609920244266\\
19.7564431168236	196.609920244266\\
0.225704796060164	-22.5693375425774\\
0.225704796060164	22.5693375425774\\
8.10326587039652	-76.7317186913082\\
8.10326587039652	76.7317186913082\\
28.6920939452733	-72.542848363874\\
28.6920939452733	72.542848363874\\
};
\addlegendentry{intermediate};

\addplot [color=red,only marks,mark=diamond,mark options={solid}]
  table[row sep=crcr]{%
12.2720263461011	-306.541055323123\\
12.2720263461011	306.541055323123\\
19.7562438589507	-196.610439317929\\
19.7562438589507	196.610439317929\\
0.225704787568642	-22.5693375485581\\
0.225704787568642	22.5693375485581\\
8.10482356872348	-76.7272343740742\\
8.10482356872348	76.7272343740742\\
28.6318239659765	-72.1440416741725\\
28.6318239659765	72.1440416741725\\
};
\addlegendentry{final};

\end{axis}
\end{tikzpicture}%
					\caption{\corr{Points at which data of the FOM is collected during IRKA}.}
					\label{fig:IRKAshifts}
				\end{subfigure}
				\hfill
				\begin{subfigure}[t]{0.48\textwidth}
					% This file was created by matlab2tikz.
% Minimal pgfplots version: 1.3
%
\definecolor{mycolor1}{rgb}{0.00000,0.44700,0.74100}%
\begin{tikzpicture}

\begin{axis}[%
width=0.95092\mywidth,
height=\myheight,
at={(0\mywidth,0\myheight)},
scale only axis,
xmin=0,
xmax=30,
xlabel={$i$},
ymode=log,
ymin=1e-20,
ymax=1,
yminorticks=true,
ylabel={$\varsigma_i/\varsigma_1$},
title style={font=\bfseries},
title={Singular value decay},
legend style={font=\footnotesize}
]
\addplot [color=mycolor1,solid,mark=o,mark options={solid},forget plot]
  table[row sep=crcr]{%
1	1\\
2	0.48028625039849\\
3	0.34105683691047\\
4	0.27700600542327\\
5	0.0151595068873339\\
6	0.00209307258733632\\
7	3.44476516008839e-05\\
8	2.40525120206174e-05\\
9	1.30005045637117e-06\\
10	6.66711433145258e-07\\
11	3.85618506986417e-07\\
12	1.84679319891831e-08\\
13	2.95701371623099e-10\\
14	1.34456331272881e-10\\
15	5.39069786717619e-11\\
16	9.50103984414813e-14\\
17	6.49302173701959e-14\\
18	3.90294449118889e-14\\
19	3.55238083472643e-14\\
20	2.20389208372869e-14\\
21	1.67782342606868e-14\\
22	7.0300925141478e-15\\
23	2.61763510877098e-15\\
24	1.24888702652983e-15\\
25	8.605913226954e-16\\
26	7.49418180073202e-16\\
};
\end{axis}
\end{tikzpicture}%
					\caption{\corr{Decay of singular values of the matrix $\left[\Loew,\sLoew\right]$ for the data collected during IRKA.}}
					\label{fig:LoewnerDecay}
				\end{subfigure}
				\caption{\corr{Data collecting during IRKA can be used to generate data-driven surrogates.}}
			\end{figure}
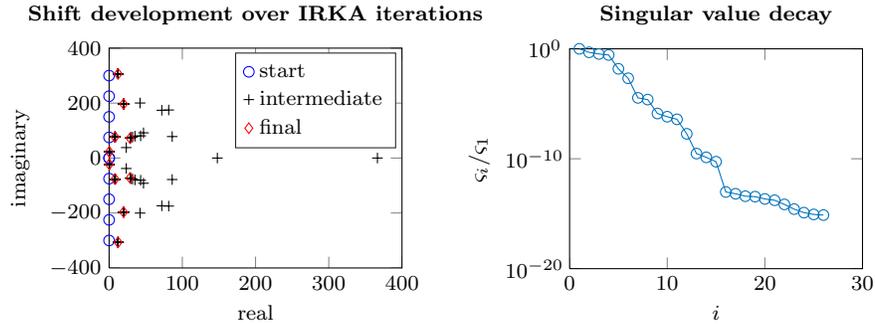
						
			To use this ``free'' data, there are various choices for ``data-driven" procedures that produce useful rational approximations.  \emph{Loewner methods} \cite{mayo2007framework, antoulas1986scalar, anderson1990rational, lefteriu2010new} are effective and are already integrated into IRKA iteration strategies \cite{beattie2012realization}.  We adopt here a \emph{vector fitting} strategy \cite{gustavsen1999rational,gustavsen2006improving,deschrijver2008macromodeling,drmac2015vectorrohtua, drmac2015quadrature} instead.  This allows us to produce \corr{stable} low-order approximations of the reduction error \corr{after IRKA}
			\begin{equation}
				\widetilde{G_e^0} \approx G_e^0\defeq G-\GrZ.
			\end{equation}  
			An appropriate choice of order for the surrogate model can be obtained by forming the Loewner $\Loew$ and shifted Loewner $\sLoew$ matrices 
			\textcolor{black}{from $G$ and $G'$ evaluations that were generated in the course of the IRKA iteration}  and then observing the singular value decay of the matrix $\left[\Loew,\sLoew\right]$, as indicated in Figure \ref{fig:LoewnerDecay}.

			Using the decomposition in \eqref{eq:GrDSplit}, the $\Hinf$-norm evaluations required during the optimization will be feasible even for large-scale full order models. In addition, it will allow us to obtain a cheap estimate $\tilde{e}_{\Hinf}$ for the approximation error
			\begin{equation}
				e_{\Hinf} \defeq \norm{G-\GrD}_{\Hinf} \approx \norm{\widetilde{G_e^0} - \Delta\GrD}_{\Hinf} = \tilde{e}_{\Hinf} 
			\end{equation}

		\subsubsection{Constrained multivariate optimization with respect to $D_r$}
	The focus of this work lies in the development of new model reduction strategies.  Our intent is not directed toward making a contribution to either the theory or practice of numerical optimization and we are content in this work to use standard optimization approaches.   In the results of section \ref{sec:Results}, we rely on state-of-the-art algorithms that are widespread and available, e.g., in \textsc{matlab}. 
			With that caveat understood, we do note that the constrained multivariate optimization over the reduced feed-through, $D_r$, is a challenging optimization problem, so we will explain briefly the setting that seems to work best in our case.  The computation and optimization of $\Hinf$-norms for large-scale models remains an active area of research, as demonstrated by \cite{mitchell2015fixed,mitchell2015hybrid,aliyev2016large}.
			
			The problem we need to solve in step \corr{2} of Algorithm \ref{algo:Hinf} is			
			\begin{equation}\label{eq:optimization}
			\begin{split}
%			\min_{D_r\in\Reals^{p\times m}}&\norm{G(s) - \GrD(s,\Dr)}_{\Hinf}\\
			\min_{D_r\in\Reals^{p\times m}}&\max_{\omega}{\varsigma_{max}\left(G(j\omega) - \GrD(j\omega,\Dr)\right)}\\
			s.t. \quad &\GrD(s,\Dr) \; \text{is stable}
			\end{split}
			\end{equation}	
			which represents a non-smooth, non-convex multivariate optimization problem in a \corr{$p\!\times\! m$}-dimensional search space. In our experience, the best strategy considering both optimization time and optimal solution is given by a combination of \emph{coordinate descent} \corr{(CD)} \cite{wright2015coordinate} and subsequent \corr{\emph{multivariate optimization}} \corr{(MV)}.
			\corr{We refer to this combined strategy as CV+MV.}
			The coordinate descent strategy is used in this setting somewhat like an initialization procedure to find a better starting point than $\DrZ=0$. This initialization is based on reducing the search space from $p\cdot m$ dimensions to just one, hence performing a much simpler univariate optimization in each step. Once one cycle has been conducted for all elements in the feed-through matrix, the resulting feed-through is used to initialize a nonlinear constrained optimization solver that minimizes the error with respect to the whole $D_r$ matrix.  We have used a sequential quadratic programming (SQP) method as implemented in \textsc{matlab}'s \mcode{fmincon}, although acceptable options for this final step abound.   Further information about optimization strategies can be found in \cite{nocedal2006numerical}.		
			
			The suitability of \corr{CD+MV} is motivated by extensive simulations conducted comparing different strategies, such as direct multivariate optimization, \emph{global search} (GS) \cite{ugray2007scatter}, and \emph{genetic algorithms} (GA) \corr{(cp. Figure \ref{fig:SolverComparison})}.
			Ultimately, we rely on the results of Section \ref{sec:Results} to show that this procedure is effective.
			\setlength\mywidth{90 mm}
			\setlength\myheight{25 mm}
			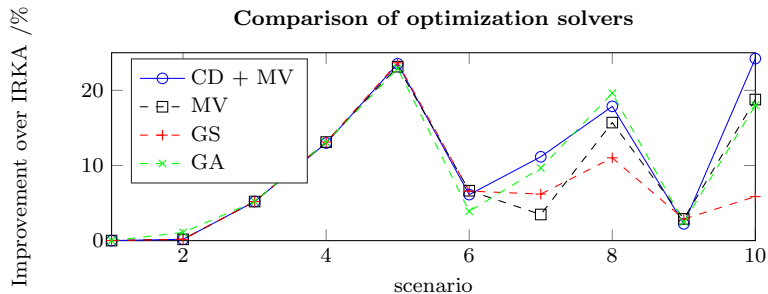
\begin{figure}[h]
				\centering
				% This file was created by matlab2tikz.
% Minimal pgfplots version: 1.3
%
\begin{tikzpicture}

\begin{axis}[%
width=0.95092\mywidth,
height=\myheight,
at={(0\mywidth,0\myheight)},
scale only axis,
xmin=1,
xmax=10,
xlabel={scenario},
ymin=0,
ymax=25,
ylabel={Improvement over IRKA /\%},
title style={font=\bfseries},
title={Comparison of optimization solvers},
legend style={at={(0.03,0.97)},anchor=north west,legend cell align=left,align=left,draw=white!15!black},
legend style={font=\footnotesize}
]
\addplot [color=blue,solid,mark=o,mark options={solid}]
  table[row sep=crcr]{%
1	0.00681066951213616\\
2	0.175807859925581\\
3	5.20553864701622\\
4	12.9855059456818\\
5	23.5318408069326\\
6	6.12217995155312\\
7	11.1562249309093\\
8	17.8669521894189\\
9	2.23158120086834\\
10	24.2272177739557\\
};
\addlegendentry{CD + MV};

\addplot [color=black,dashed,mark=square,mark options={solid}]
  table[row sep=crcr]{%
1	0.00681064273571064\\
2	0.175807890469382\\
3	5.20439761978322\\
4	13.1171881141933\\
5	23.144980445202\\
6	6.64596734114894\\
7	3.48096333210287\\
8	15.7096492388692\\
9	2.89596780059834\\
10	18.7701117890886\\
};
\addlegendentry{MV};

\addplot [color=red,dashed,mark=+,mark options={solid}]
  table[row sep=crcr]{%
1	0.00681066725315427\\
2	0.17580790043441\\
3	5.20431871259779\\
4	13.1200027742559\\
5	23.6645659508247\\
6	6.59078061589113\\
7	6.19180733577567\\
8	11.0161428447163\\
9	2.86946372612744\\
10	5.86217452297733\\
};
\addlegendentry{GS};

\addplot [color=green,dashed,mark=x,mark options={solid}]
  table[row sep=crcr]{%
1	0.00681067277676917\\
2	1.09569512238805\\
3	5.20970632474869\\
4	13.0950118806022\\
5	22.8950749844458\\
6	3.90684635076182\\
7	9.64931393596479\\
8	19.6279658219395\\
9	2.47435733889919\\
10	18.0079489983\\
};
\addlegendentry{GA};

\end{axis}
\end{tikzpicture}%
				\caption{Comparison of different solvers shows the effectiveness of coordinate descent followed by multivariate optimization.}
				\label{fig:SolverComparison}
			\end{figure}			

	\section{Numerical results}\label{sec:Results}
	\corr{In the following we demonstrate the effectiveness of the proposed procedure by showing reduction results with different MIMO models. The reduction code is based on the sssMOR toolbox\footnote{Available at \url{www.rt.mw.tum.de/?sssMOR}.} \cite{castagnotto2016sss}.  For generation of vector fitting surrogates, we use the 
 \mcode{vectfit3} function\footnote{Available at \url{www.sintef.no/projectweb/vectfit/downloads/vfut3/}.} \cite{gustavsen1999rational,gustavsen2006improving,deschrijver2008macromodeling}. Note that more recent implementation of MIMO vector fitting introduced in \cite{drmac2015vectorrohtua} could be used instead, especially for improved robustness.}
	\subsection{Heat model}
		Our proposed procedure is demonstrated through numerical examples conducted on a MIMO benchmark model representing a discretized heat equation of order $\fo=197$ \corr{with $p=2$ outputs and $m=2$ inputs} \corr{\cite{antoulas2001survey}}.
%		\todos{@Serkan: could you add a few words about this model?}
		
		Model reduction for this model was conducted for a range of reduced orders; the results are summarized in \corr{Table \ref{tab:heatmodel}}.		
		%		\subsection{Heat model}
		The table shows the reduced order $\ro$, the order $\nm$ of the error surrogate $\widetilde{\GeZ}$, and \corr{the relative $\Hinf$ error of the proposed ROM $\GrD$,} as well as the percentage improvement over the initial IRKA model. 
		\textcolor{black}{Our proposed 
		method improves significantly on the $\mathcal{H}_\infty$ performance of IRKA, in some cases by more than $50\%$.}
		
% =======================================================
%	1st submission
% =======================================================
%\begin{table}[h!]
%	\centering
%	\caption{Results for the heat model problem}
%	\begin{tabular}{c|cccccccccc}     
%		\hline $\ro$ & 1 & 2 & 3 & 4 & 5 & 6 & 7 & 8 & 9 & 10\\    
%		\hline                                           
%		$\nm$ & 14 & 30 & 24 & 22 & 20 & 30 & 32 & 36 & 36 & 36 \\                                                                    
%		$\frac{\norm{G-\GrD}}{\norm{G}}$& 8.7e-02 & 7.6e-03 & 1.2e-02 & 1.2e-03 & 1.5e-03 & 5.7e-04 & 4.1e-04 & 1.6e-04 & 4.4e-05 & 8.6e-06 \\
%		$1-\frac{\norm{G-\GrD}}{\norm{G-\GrZ}}$ &50.8\% & 39.3\% & 27.0\% & 36.7\% & 25.7\% & 44.8\% & 52.0\% & 44.6\% & 49.5\% & 42.6\% \\ \\                                                                  
%	\end{tabular}  
%	\label{tab:heatmodel}
%\end{table}

% =======================================================
%	improvements for final submission
% =======================================================

\begin{table}[h!] 
	\centering
	\caption{Results for the heat model problem}
	\begin{tabular}{c|cccccccccc}                                            \hline                                
		$\ro$ & 1 & 2 & 3 & 4 & 5 & 6 & 7 & 8 & 9 & 10 \\       
		\hline                                                         
			$\nm$  & 14 & 24 & 20 & 22 & 24 & 30 & 32 & 36 & 36 & 36 \\                                                           
		$\frac{\norm{G-\GrD}}{\norm{G}}$ & 8.7e-02 & 7.6e-03 & 1.2e-02 & 1.2e-03 & \corr{6.5e-04} & 5.7e-04 & 4.1e-04 & 1.6e-04 & 4.4e-05 & 8.6e-06 \\ [2ex]
		$1-\frac{\norm{G-\GrD}}{\norm{G-\GrZ}}$ & 50.8\% & 39.0\% & 27.0\% & 36.7\% & \corr{36.0}\% & 44.8\% & 52.0\% & 44.6\% & 49.5\% & 42.6\% \\                        
	\end{tabular}                                                  	\label{tab:heatmodel}                                                                                  
\end{table}      
		
		Figure \ref{fig:heatmodel_comparison} gives a graphical representation of the reduction results. 
		The plots compare the approximation error achieved \corr{after applying MIHA, with a vector fitting surrogate as described in Section \ref{sec:surrogate},} to other reduction strategies. These include the direct reduction with IRKA, balanced truncation (BT), Optimal Hankel Norm \corr{A}pproximation (OHN\corr{A}) as well as the optimization with respect to the actual error $\GeZ$ \corr{(MIHA without surrogate)}. For a better graphical comparison throughout the reduced orders studied, the errors are related to the theoretical lower bound given by
		\begin{equation}
			\underline{e}_{\Hinf} \defeq \varsigma^H_{\ro+1},
		\end{equation}
		with which we denote the Hankel singular value of order $\ro+1$.
		\setlength\mywidth{90 mm}
		\setlength\myheight{24 mm}
		\begin{figure}[h!]
			\centering
			% This file was created by matlab2tikz.
% Minimal pgfplots version: 1.3
%
\definecolor{mycolor1}{rgb}{0.00000,1.00000,1.00000}%
\begin{tikzpicture}

\begin{axis}[%
width=0.713813\mywidth,
height=\myheight,
at={(0\mywidth,0\myheight)},
scale only axis,
xmin=0,
xmax=10,
xlabel={reduced order},
ymin=1,
ymax=5,
ylabel={$\frac{\|G-G_r\|_{\mathcal{H}_\infty}}{\underline{e}_{\mathcal{H}_\infty}}$},
legend style={at={(1.03,0.5)},anchor=west,legend cell align=left,align=left,draw=white!15!black},
legend style={font=\footnotesize}
]
\addplot [color=blue,solid,mark=o,mark options={solid}]
  table[row sep=crcr]{%
1	1.27593678987847\\
2	1.05022077379947\\
3	1.90094672781014\\
4	1.08672610468601\\
5	1.17525630829163\\
6	1.13576197159972\\
7	2.32653656021203\\
8	1.12204470536217\\
9	1.29022027356565\\
10	1.1233247176539\\
};
\addlegendentry{MIHA (VF surr.)};

\addplot [color=blue,dash pattern=on 1pt off 3pt on 3pt off 3pt]
  table[row sep=crcr]{%
1	1.27192177588262\\
2	1.04486061333364\\
3	1.90365050543942\\
4	1.08270881613349\\
5	1.12704262657334\\
6	1.13322851709502\\
7	2.32104186213531\\
8	1.34319761988163\\
9	1.29001739767631\\
10	1.08457240915385\\
};
\addlegendentry{MIHA (no surr.)};

\addplot [color=red,solid,mark=square,mark options={solid}]
  table[row sep=crcr]{%
1	2.59547160394697\\
2	1.72232567703345\\
3	2.60575058010256\\
4	1.71714908655505\\
5	1.83707829918636\\
6	2.05784250026551\\
7	4.84878757527703\\
8	2.0254351611853\\
9	2.55643292538386\\
10	1.95759923986102\\
};
\addlegendentry{IRKA};

\addplot [color=black,dashed,mark=+,mark options={solid}]
  table[row sep=crcr]{%
1	1.16166586433089\\
2	1.06023859721613\\
3	1.0895098599177\\
4	1.18215718841685\\
5	1.21677169033204\\
6	1.0390773773994\\
7	1.06163236571238\\
8	1.0220577563731\\
9	1.11379572744826\\
10	1.0860415760103\\
};
\addlegendentry{OHNA};

\addplot [color=mycolor1,dashed,mark=triangle,mark options={solid,rotate=180}]
  table[row sep=crcr]{%
1	2.05739932979069\\
2	1.72470815071919\\
3	2.1528728463892\\
4	1.32951949668134\\
5	2.26091447521957\\
6	1.82925310314691\\
7	2.09675653805782\\
8	1.9201929059276\\
9	1.9779611164241\\
10	1.97539560240851\\
};
\addlegendentry{BT};

\end{axis}
\end{tikzpicture}%
			\caption{Plot of the approximation error relative to the theoretical error bound.}
			\label{fig:heatmodel_comparison}
		\end{figure}
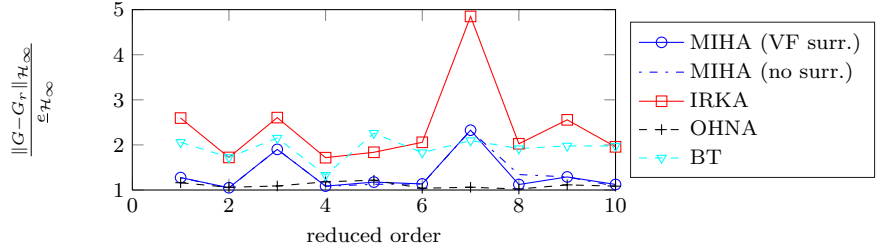	
		
		Notice how effectively the ROMs resulting from the $\Dr$-optimization reduce the $\Hinf$-error 
		beyond what is produced by the IRKA ROMs and that they often, 
		\textcolor{black}{(here, in \corr{$9$} out of $10$ cases)}
		%, even though not always, 
		yield better results than BT and sometimes \textcolor{black}{(here, in $3$ out of $10$ cases)} yield better results even than OHN\corr{A}.
		Note also that the optimization with respect to the vector-fitting surrogate produces as good a result as optimization with respect to the true error. For reduced order $\ro=8$, optimization with respect to the surrogate yields even a better result. This is not expected and may be due to the different cost functions involved, causing optimization of the true error to converge to a worse \corr{solution}.
		
		The plot also confirms our initial motivation in using IRKA models as starting points, since their approximation in terms of the $\Hinf$ norm is often not far from BT.
		Finally, note how in several cases the resulting ROM is very close to the theoretical lower bound, which implies that the respective ROMs are not far from being the \emph{global} optimum.		
		
		Figure \ref{fig:heatmodel_errorplot} shows the approximation error before and after the feed-through optimization for a selected reduced order of 2.
		The largest singular value is drastically reduced (ca. 40\%) by lifting up the value at high frequencies. This confirms our intuition that the $\Hinf$-optimal reduced order model should have a \corr{nearly constant error modulus} over all frequencies.
		Finally, Figure \ref{fig:heatmodel_errorEstimation} demonstrates the validity of the error estimate $\tilde{e}_{\Hinf}$ obtained using the surrogate model.
		\setlength\mywidth{0.35\textwidth}
		\setlength\myheight{0.25\textwidth} 
		\begin{figure}[h]
			\centering
			\begin{subfigure}[t]{0.48\textwidth}
				\input{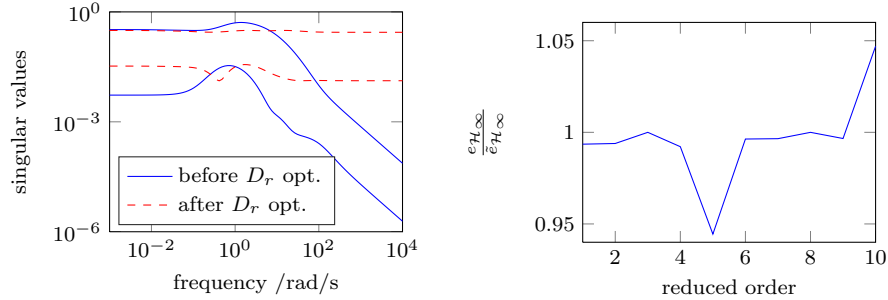}
				\caption{\corr{Singular value plot of the error} before and after optimization.}
				\label{fig:heatmodel_errorplot}
			\end{subfigure}
			\hfill
			\begin{subfigure}[t]{0.48\textwidth}
				% This file was created by matlab2tikz.
% Minimal pgfplots version: 1.3
%
\begin{tikzpicture}

\begin{axis}[%
width=0.95092\mywidth,
height=\myheight,
at={(0\mywidth,0\myheight)},
scale only axis,
xmin=1,
xmax=10,
xlabel={reduced order},
ymin=0.94,
ymax=1.06,
ylabel={$\frac{e_{\mathcal{H}_\infty}}{\tilde{e}_{\mathcal{H}_\infty}}$},
legend style={font=\footnotesize}
]
\addplot [color=blue,solid,forget plot]
  table[row sep=crcr]{%
1	0.993536685037576\\
2	0.99390963319315\\
3	1.00000000917165\\
4	0.992160936116299\\
5	0.944445871456254\\
6	0.996343802795901\\
7	0.996539557439754\\
8	0.999995411853368\\
9	0.996630110836096\\
10	1.0469898078952\\
};
\end{axis}
\end{tikzpicture}%
				\caption{Comparison of error estimate $\tilde{e}_{\Hinf}$ versus true errror $e_{\Hinf}$.}
				\label{fig:heatmodel_errorEstimation}
			\end{subfigure}
			\caption{\corr{Optimization with the surrogate effectively reduces and provides an accurate estimate of the true error.}}
		\end{figure}

		\subsection{\corr{ISS model}}
		\corr{Similar simulations were conducted on a MIMO model with $m=3$ inputs and $p=3$ outputs of order $\fo = 270$, representing the 1r component of the International Space Station (ISS) \cite{Chahlaoui_Benchmark}. The results are summarized in Table \ref{tab:iss} and Figure \ref{fig:ISS_comparison}}.
		%
		%\begin{table}                                                                                               
%			\centering               
%			\caption{Opts.irka.init = ones}                                                                                         
%			\label{table:MyTableLabel}                                                                                  
%			\begin{tabular}{c|cccccccccc}                                                                              
%				\hline $\ro$ & 2 & 4 & 6 & 8 & 10 & 12 & 14 & 16 & 18 & 20 \\                                                               
%				\hline $\nm$ & 12 & 18 & 18 & 36 & 18 & 15 & 15 & 18 & 36 & 84 \\                                                          
%				$\frac{\norm{G-\GrD}}{\norm{G}}$ & 2.7e-01 & 9.4e-02 & 9.0e-02 & 8.6e-02 & 3.6e-02 & 3.4e-02 & 3.1e-02 & 3.1e-02 & 1.1e-02 & 4.1e-02 \\
%				$1-\frac{\norm{G-\GrD}}{\norm{G-\GrZ}}$ & 7.5\% & 9.9\% & 2.2\% & 6.6\% & 9.5\% & 13.8\% & 20.9\% & 21.1\% & -2.1\% & -6.1\% \\                             
%			\end{tabular}                                                             
%		\end{table} 		
		\begin{table}                                                                                                    
			\centering                                                                                                       \caption{Results for the ISS problem}                        
			\begin{tabular}{c|cccccccccc}                                                                                     
				\hline $\ro$ & 2 & 4 & 6 & 8 & 10 & 12 & 14 & 16 & 18 & 20 \\                                                               
				\hline $\nm$  & 12 & 18 & 12 & 18 & 18 & 15 & 42 & 48 & 30 & 30 \\                                                          
				$\frac{\norm{G-\GrD}}{\norm{G}}$ &2.7e-01 & 9.4e-02 & 8.4e-02 & 7.9e-02 & 3.6e-02 & 3.4e-02 & 2.2e-02 & 2.2e-02 & 1.0e-02 & 7.7e-03 \\ [2ex]
				$1-\frac{\norm{G-\GrD}}{\norm{G-\GrZ}}$  & 7.5 \% & 9.9\% & 8.8\% & 4.9\% & 9.5\% & 13.8\% & 23.3\% & 15.7\% & 3.5\% & 25.8\% \\                              
			\end{tabular}  
			\label{tab:iss}                                                            
		\end{table}    
		\corr{Note that the $\Hinf$-error after IRKA is comparable to that of BT and the proposed procedure is effective in further reducing the error, outperforming BT in all cases investigated.}
		\setlength\mywidth{90 mm}
		\setlength\myheight{20 mm}		
		\begin{figure}[h!]
			\centering
			% This file was created by matlab2tikz.
% Minimal pgfplots version: 1.3
%
\definecolor{mycolor1}{rgb}{0.00000,1.00000,1.00000}%
\begin{tikzpicture}

\begin{axis}[%
width=0.713813\mywidth,
height=\myheight,
at={(0\mywidth,0\myheight)},
scale only axis,
xmin=0,
xmax=20,
xlabel={reduced order},
ymin=1.4,
ymax=2.1,
ylabel={$\frac{\|G-G_r\|_{\mathcal{H}_\infty}}{\underline{e}_{\mathcal{H}_\infty}}$},
title style={font=\bfseries},
title={Error norms with respect to the theoretical lower bound},
legend style={at={(1.03,0.5)},anchor=west,legend cell align=left,align=left,draw=white!15!black},
legend style={font=\footnotesize}
]
\addplot [color=blue,solid,mark=o,mark options={solid}]
  table[row sep=crcr]{%
2	1.85062159355093\\
4	1.80330169504364\\
6	1.82528721046992\\
8	1.87725181174351\\
10	1.77263140010921\\
12	1.7547645712586\\
14	1.5505236854814\\
16	1.68659467349564\\
18	1.93865426715746\\
20	1.47676566755142\\
};
\addlegendentry{MIHA (VF surr.)};

\addplot [color=red,solid,mark=square,mark options={solid}]
  table[row sep=crcr]{%
2	2.00017913037851\\
4	2.0009010533358\\
6	2.00154070346904\\
8	1.97476519702429\\
10	1.95775942767939\\
12	2.03532118199623\\
14	2.02085673768045\\
16	1.99991997378079\\
18	2.00912370241388\\
20	1.99142000828074\\
};
\addlegendentry{IRKA};

\addplot [color=mycolor1,dashed,mark=triangle,mark options={solid,rotate=180}]
  table[row sep=crcr]{%
2	2.00019411320307\\
4	2.00090105336438\\
6	2.00154079403855\\
8	1.97470028763184\\
10	1.95766959318082\\
12	1.99971652976781\\
14	2.04718684090777\\
16	2.00000241435769\\
18	2.00910856605433\\
20	1.99322908548432\\
};
\addlegendentry{BT};

\end{axis}
\end{tikzpicture}%
			\caption{Plot of the approximation error relative to the theoretical error bound (ISS).}
			\label{fig:ISS_comparison}
		\end{figure}
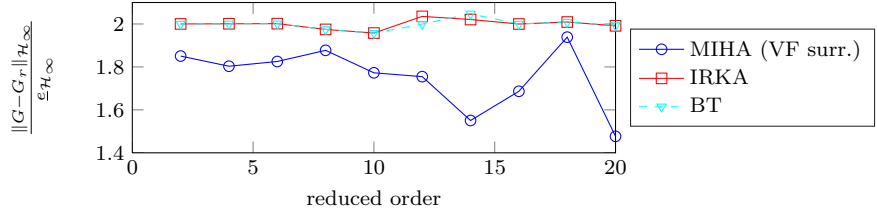
		\corr{Finally, note also in this case that the modulus of the error due to this $\Hinf$-approximation procedure is nearly constant, as anticipated. This is demonstrated in Figure \ref{fig:iss_errorplot}, where the error plots for the reduction order $\ro=10$ are compared.	}	
		\setlength\mywidth{90 mm}
		\setlength\myheight{30 mm}
		\begin{figure}[h!]
			\input{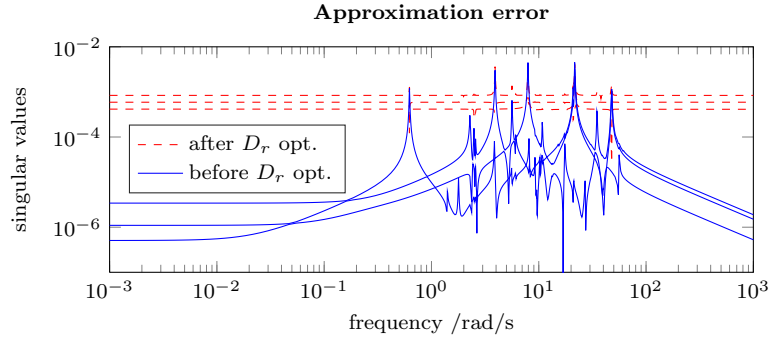}
			\caption{Singular value plot of the error before and after optimization (ISS).}
			\label{fig:iss_errorplot}
		\end{figure}

       §§
                 
\bibliographystyle{unsrt} %unsrt    
\bibliography{Hinf}

\end{document}